\newcommand{\cA}{{\mathcal A}}
\newcommand{\longc}{,\dotsc,}
\newcommand{\longp}{+\dotsb+}
\newcommand{\longu}{\cup\dotsb\cup}
\newcommand{\seq}{\subseteq}
\newcommand{\stm}{\setminus}
\renewcommand{\phi}{\varphi}
\newcommand{\sig}{\sigma}
\renewcommand{\Im}{\mathrm{Im}}
\renewcommand{\>}{\rangle}
\newcommand{\lfl}{\left\lfloor}
\newcommand{\rfl}{\right\rfloor}
\newcommand{\lpr}{\left(}
\newcommand{\rpr}{\right)}
\newcommand{\lcl}{\left\lceil}
\newcommand{\rcl}{\right\rceil}
\newtheorem{lemma}{Lemma}
\newtheorem{theorem}{Theorem}
\newtheorem{conjecture}{Conjecture}
\newtheorem{proposition}{Proposition}
\newcommand{\refj}[1]{\ref{j:#1}}
\newcommand{\refl}[1]{\ref{l:#1}}
\newcommand{\refp}[1]{\ref{p:#1}}
\newcommand{\reft}[1]{\ref{t:#1}}
\newcommand{\refs}[1]{\ref{s:#1}}
\newcommand{\refb}[1]{\cite{b:#1}}
\newcommand{\refe}[1]{\eqref{e:#1}}
\title%
[Quotient sets in nonabelian groups]{Quotient sets \\ in nonabelian groups}
\author{Vsevolod F. Lev}
\email{seva@math.haifa.ac.il}
\address{Department of Mathematics, The University of Haifa at Oranim,
	Tivon 36006, Israel}
\subjclass[2020]{Primary 11B75}
\keywords{Small doubling, Sumset, Kneser's theorem}
\begin{document}
\baselineskip=16pt

\begin{abstract}
We show that for a finite, nonempty subset $A$ of a  group, the quotient  
set $A^{-1}A:=\{a_1^{-1}a_2\colon a_1,a_2\in A\}$ has size 
$|A^{-1}A|\ge\frac53\,|A|$, unless $A$ is densely contained in a coset, or 
in a union of two cosets of a finite subgroup. 
\end{abstract}

\maketitle

\section{Introduction: Background and the Main Result}

One of the cornerstones of the additive combinatorics is Kneser's 
theorem~\cite{b:kn1,b:kn2} relating the size of a sumset in an abelian group 
to the sizes of the set summands. As shown by Olson~\refb{o1}, a 
straightforward, simple-minded analogue of Kneser's theorem for nonabelian 
groups fails to hold. Many partial extensions of the theorem in the 
nonabelian settings are known, however; see, for instance, 
\cite{b:f,b:o1,b:o2,b:sw,b:t,b:z,b:h}. Particularly relevant in our present 
context are the papers by Freiman~\refb{f}, Olson~\refb{o2}, and 
Hamidoune~\refb{h}. 

In~\refb{f}, Freiman classified finite, nonempty subsets $A$ of a group with 
the product set $A^2:=\{a_1a_2\colon a_1,a_2\in A\}$ satisfying 
$|A|^2<\frac85\,|A|$. 

Olson has extended Freiman's result onto products with distinct set factors; 
namely, as shown in~\cite[Theorem~1]{b:o1}, if $A$ and $B$ are finite, 
nonempty subsets of a group, then ``normally'' the product set 
$AB:=\{ab\colon a\in A,b\in B\}$ has size $|AB|\ge|A|+\frac12\,|B|$. 

Improving the \emph{doubling coefficients} $\frac85$ and $\frac12$ in the   
Freiman-Olson estimates is a fascinating, mostly open, problem.

Addressing the case where $B=A^{-1}$, Hamidoune~\refb{h} has established some 
properties of the quotient set  $A^{-1}A:=\{a_1^{-1}a_2\colon a_1,a_2\in A\}$ 
assuming that $|A^{-1}A|<\frac53\,|A|$. 

In this note, under the same assumption $|A^{-1}A|<\frac53\,|A|$, we  
completely determine the structure of the set $A$ itself, with an 
if-and-only-if-type classification. 

For a subgroup $H$ of a group $G$, let $N(H)$ denote the  normalizer of $H$ 
in $G$. 
\begin{theorem}\label{t:main}
Let $A$ be a finite subset of a group $G$. Then $|A^{-1}A|<\frac53\,|A|$ if 
and only if one of the following holds: 
\begin{itemize}
\item[(i)] there is a finite subgroup $H\le G$ such that $A$ is contained 
    in a left $H$-coset and $|A|>\frac35\,|H|$;  
\item[(ii)] there is a finite subgroup $H\le G$ and elements $a,b\in G$ 
    with $(a^{-1}b)^2\notin H$ and $a^{-1}b\in N(H)$ such that $A\seq 
    aH\cup bH$ and  $|A|>\frac95\,|H|$. 
\end{itemize}
Moreover, in the case (i) we have $A^{-1}A=H$, while in the case (ii) the set 
$A^{-1}A$ is a disjoint union of $H$ and two double $H$-cosets of size $|H|$ 
each.  
\end{theorem}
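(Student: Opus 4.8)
The plan is to prove the two implications separately: the ``if'' direction is a direct computation, while the ``only if'' direction carries all the weight and is where Kneser's theorem and Hamidoune's work enter.

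For the ``if'' direction the one elementary fact used repeatedly is that if $B,C$ are subsets of a finite group $H$ with $|B|+|C|>|H|$, then $B^{-1}C=H$; indeed for each $h\in H$ one has $|Bh\cap C|\ge|B|+|C|-|H|>0$. In case~(i), after left-translating so that $A\subseteq H$, this gives $A^{-1}A\subseteq H^{-1}H=H$ and, since $2|A|>\tfrac65|H|>|H|$, in fact $A^{-1}A=H$, whence $|A^{-1}A|=|H|<\tfrac53|A|$. In case~(ii), left-translate so that $a=1$ and put $g:=a^{-1}b\in N(H)$; then $A=X\cup gY$ with $X,Y\subseteq H$, and $|A|>\tfrac95|H|>|H|$ forces $X,Y$ nonempty with $|X|,|Y|>\tfrac12|H|$ and $|X|+|Y|>|H|$. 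Expanding $A^{-1}A=(X^{-1}X)\cup(X^{-1}gY)\cup(Y^{-1}g^{-1}X)\cup(Y^{-1}Y)$ and using $gH=Hg$, the four pieces lie in $H,gH,g^{-1}H,H$, and the elementary fact (for the cross terms after rewriting $X^{-1}gY=(X^{-1}\cdot gYg^{-1})\,g$) shows each one fills its coset; hence $A^{-1}A=H\sqcup HgH\sqcup Hg^{-1}H$, three distinct cosets of size $|H|$ since $g^2\notin H$, and $|A^{-1}A|=3|H|=\tfrac53\cdot\tfrac95|H|<\tfrac53|A|$. This simultaneously verifies the ``moreover'' clauses.

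For the ``only if'' direction I would proceed as follows. Assume $|A^{-1}A|<\tfrac53|A|$; replacing $A$ by $a_0^{-1}A$ for some $a_0\in A$ changes neither side, so assume $1\in A$, whence $A\subseteq S:=A^{-1}A$, the set $S$ is symmetric, $1\in S$, and $|A|>\tfrac35|S|$. Put $\Gamma:=\langle A\rangle=\langle S\rangle$ and $H:=\{g\in\Gamma\colon gS=S\}$; applying inversion and using $S=S^{-1}$ shows $H$ is also the right stabilizer of $S$, so $H$ is a subgroup with $HS=SH=S$, and $1\in S$ gives $H\subseteq S$, so $H$ is finite. Thus $S$ is a union of $k:=|S|/|H|$ cosets of $H$. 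The heart of the matter is the bound $k\le3$. Writing $m$ for the number of $H$-cosets meeting $A$, one has $|A|\le m|H|$, hence $k<\tfrac53m$ from $|A|>\tfrac35|S|$; and the decisive input is a Kneser-type inequality $k\ge2m-1$, asserting that the $H$-periodic set $A^{-1}A$, whose stabilizer modulo $H$ is trivial by the choice of $H$, occupies at least $2m-1$ cosets of $H$. Then $2m-1\le k<\tfrac53m$ forces $m\le2$, whence $k\le2m-1\le3$. The Kneser-type inequality $k\ge2m-1$ is the main obstacle: its abelian prototype is Kneser's theorem~\cite{b:kn1,b:kn2} (applied in $\Gamma/H$, where the stabilizer has become trivial), and in the nonabelian case one must extract it from the structural analysis of quotient sets carried out by Hamidoune under exactly this hypothesis~\refb{h}, with Olson's theorem~\refb{o1} also in play; this is the one place where $|A^{-1}A|<\tfrac53|A|$ is genuinely used. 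Once $m\le2$ is known, $\Gamma/H$ is cyclic, and in particular $H\trianglelefteq\Gamma$, which is needed below.

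Finally I would read off the conclusion from $m\in\{1,2\}$. If $m=1$, then $k<\tfrac53$ gives $k=1$, so $S=H$; un-translating, $A$ lies in a single $H$-coset with $|A|>\tfrac35|H|$, which is~(i), and $A^{-1}A=H$. If $m=2$, then $k\ge2m-1=3$ and $k\le3$, so $k=3$; passing to $\bar\Gamma:=\Gamma/H$, the image $\bar A$ contains $1$, generates $\bar\Gamma$, and has two elements, say $\bar A=\{1,\bar g\}$, so $\bar S=\bar A^{-1}\bar A=\{1,\bar g,\bar g^{-1}\}$ and $|\bar S|=3$, which forces $\bar g^2\ne1$, i.e.\ $g^2\notin H$. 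Un-translating, $A\subseteq a_0H\cup a_0gH$ with $a_0^{-1}(a_0g)=g\in N(H)$ (as $H\trianglelefteq\Gamma$) and $g^2\notin H$, and $|A|>\tfrac35|S|=\tfrac95|H|$; this is~(ii), with $A^{-1}A=S=H\sqcup HgH\sqcup Hg^{-1}H$, completing the classification.
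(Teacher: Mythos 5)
Your ``if'' direction is fine and matches the paper's Proposition~\refp{NH} in spirit (the same decomposition $A^{-1}A=(X^{-1}X)\cup(X^{-1}gY)\cup(Y^{-1}g^{-1}X)\cup(Y^{-1}Y)$, the same box-principle argument to fill each coset, and the same use of $g^2\notin H$ for disjointness).

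The ``only if'' direction has a genuine gap at the step you yourself flag as the heart of the matter: the claim that if $H$ is the two-sided stabilizer of $S=A^{-1}A$ and $A$ meets $m$ cosets of $H$, then $S$ fills at least $k\ge 2m-1$ cosets of $H$. This is precisely the nonabelian Kneser-type statement that is \emph{known to fail} in general, as the paper's introduction explicitly notes by citing Olson's counterexample. Pointing to Hamidoune~\refb{h} does not close the gap: the introduction states that Hamidoune obtained only ``some properties'' of $A^{-1}A$ under the hypothesis $|A^{-1}A|<\frac53|A|$ without determining the structure of $A$, and the paper's own proof of necessity never invokes Hamidoune. You would need to actually prove $k\ge 2m-1$ (under the small-doubling hypothesis, for the stabilizer $H$), and that is essentially equivalent in difficulty to the theorem itself. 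There is also a subsidiary issue that you invoke ``Kneser applied in $\Gamma/H$,'' but $H$ need not be normal in $\Gamma$ a priori; you only derive normality \emph{after} the $m\le 2$ conclusion, which in turn depended on the Kneser-type step.

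For contrast, the paper's necessity proof takes a quite different route: a strong-induction argument on $(|A^{-1}A|,-|A|)$, with $F:=\langle Q^+\rangle$ built from the high-multiplicity elements $Q^+=\{g:r(g)>|Q|-|A|\}$ rather than from the full stabilizer of $Q$. The key new input is the two-sided bound $2|A|-|Q|\le r(g)\le|Q|-|A|$ for all $g\in Q$ that are not periods of $A$ — the upper bound (which the paper calls ``quite amazing'') follows from the interaction of $Q^+$ with the box principle, not from any Kneser analogue. This feeds a counting argument (splitting $Q$ at multiplicity $\frac12|A|$) that proves $|Q_1|\ge|A|+|F|$, allowing the induction hypothesis to be applied to $Q_1$ itself and then the two resulting cases are closed off with elementary coset counting and a pigeonhole map. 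In short, where you reach for a Kneser-type inequality that does not exist nonabelianly, the paper manufactures the needed cardinality control from the Kemperman--Wehn theorem and a delicate induction; without reproducing that machinery (or a substitute proof of $k\ge 2m-1$), your necessity argument does not go through.
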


The coefficient $\frac53$ corresponds to a structure threshold: say, if $H$ 
is a finite subgroup, and $g\in N(H)$ is an element with $g^i\notin H$ for 
$i\in[1,4]$, 
then the set $A:=g^{-1}H\cup 
H\cup Hg$ satisfies $|A^{-1}A|=\frac 53\,|A|$, while $A$ does not have the 
structure described in the theorem. 

We remark that the ostensible lack of symmetry in the statement of the 
theorem clears off once we notice that any left coset is a right coset of a 
conjugate subgroup, and vice versa. 

With the exception of Section~\refs{conjecture}, the rest of the paper is 
devoted to the proof of Theorem~\reft{main}. In the next section we formally 
introduce the notation used and gather some basic facts needed for the proof. 
In Section~\refs{suff} we show that conditions (i) and (ii) of 
Theorem~\reft{main} are sufficient for $|A^{-1}A|<\frac53\,|A|$ to hold, and 
also that they imply the last assertion of the theorem (concerning the 
structure of the quotient set). Additionally, in Section~\refs{suff} we prove 
a lemma that will be used in the course of the proof of necessity in 
Section~\refs{proof}. Finally, in the concluding Section~\refs{conjecture} we 
state and briefly discuss a conjectural extension of Theorem~\reft{main} onto 
the sets $A$ satisfying $|A^{-1}A|<2|A|$.  
 
\section{Preliminaries: notation and tools}\label{s:prelim}

For subsets $A$ and $B$ of a group, we denote by $A^{-1}$ the set of  
inverses of the elements of $A$, and by $AB$ the product set: 
  $$ A^{-1}:=\{a^{-1}\colon a\in A \}
       \quad \text{and} \quad AB:=\{ab\colon a\in A,b\in B\}. $$
Thus, for instance, $A^{-1}A=\{a^{-1}b\colon a,b\in A\}$.

The subgroup generated by $A$ is denoted by $\<A\>$, and the identity element 
of the group by $1$. A left (right) coset of a subgroup $H$ is a set of the 
form $gH$ ($Hg$), where $g$ is an element of the group. A \emph{double} 
$H$-coset if a set of the form $HgH$. 

The following lemma summarizes the basic properties of double cosets.
\begin{lemma}
If $H$ is a subgroup of a group $G$, then $G$ is a disjoint union of double 
$H$-cosets. A set $S\seq G$ is a union of double $H$-cosets if and only if it 
is stable under both left and right multiplication by $H$; that is, if and 
only if $HS=SH=S$; alternatively, if and only if there is a set $T\seq G$ 
such that $S=HTH$. For $a,b\in G$, we have $HaH=HbH$ if and only if there 
exist $h_1,h_2\in H$ such that $b=h_1ah_2$. 
\end{lemma}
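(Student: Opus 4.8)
The plan is to establish the four assertions by elementary set manipulations, exploiting the idempotence $HH=H$ of a subgroup under the product operation, together with $Hh=H=hH$ for every $h\in H$.

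First I would dispose of the partition statement. Define a relation on $G$ by letting $a\sim b$ precisely when $b=h_1ah_2$ for some $h_1,h_2\in H$. Reflexivity is witnessed by $h_1=h_2=1$; symmetry follows by passing to inverses, since $b=h_1ah_2$ gives $a=h_1^{-1}bh_2^{-1}$; transitivity follows by composition, since $h_3(h_1ah_2)h_4=(h_3h_1)a(h_2h_4)$. Thus $\sim$ is an equivalence relation, and the class of $a$ is exactly $\{h_1ah_2\colon h_1,h_2\in H\}=HaH$; hence $G$ is the disjoint union of the distinct double $H$-cosets. The same observation yields the last sentence of the lemma immediately: if $HaH=HbH$, then $b\in HbH=HaH$, so $b=h_1ah_2$ for suitable $h_1,h_2\in H$, while conversely $b=h_1ah_2$ forces $HbH=(Hh_1)a(h_2H)=HaH$.

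Next I would prove the equivalence of the three conditions on $S$ by a short cycle of implications. If $S$ is a union of double cosets, say $S=\bigcup_i Ha_iH$, then $HS=\bigcup_i(HH)a_iH=\bigcup_i Ha_iH=S$, and symmetrically $SH=S$; in particular $S=HSH$, so $T=S$ works in the ``alternatively'' clause. If $S=HTH$ for some $T\seq G$, then $HS=(HH)TH=HTH=S$ and likewise $SH=S$, giving $HS=SH=S$. Finally, if $HS=SH=S$, then for each $s\in S$ we have $HsH\seq HSH=(HS)H=SH=S$, and since $s\in HsH$ this shows $S=\bigcup_{s\in S}HsH$ is a union of double cosets. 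Chaining these three implications closes the loop.

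I do not anticipate any genuine obstacle here; the one point deserving attention is to arrange the cycle so that each of the three listed conditions on $S$ occurs as both a hypothesis and a conclusion, and to keep careful track of which of $HH=H$, $Hh=H$, $hH=H$ is being invoked at each step.
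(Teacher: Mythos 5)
Your proof is correct. The paper states this lemma without proof, treating it as a standard summary of elementary facts about double cosets, so there is no paper argument to compare against. Your verification — the equivalence relation $a\sim b\iff b\in HaH$ giving the partition and the last biconditional in one stroke, and the cycle of implications among the three characterizations of a union of double cosets using $HH=H$ — is the standard one, and each step checks out.
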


The \emph{normalizer} of a subgroup $H$, denoted $N(H)$, is the subgroup 
consisting of all those group elements $g$ satisfying $gH=Hg$. 

Given a finite subset $A$ of a group, and a group element $g$, by $r(g)$ we 
denote the number of representations of $g$ in the form $g=a^{-1}b$ with 
$a,b\in A$. Clearly, $r$ is supported on the quotient set $A^{-1}A$, and 
$r(g^{-1})=r(g)\le r(1)=|A|$ for any element $g$. Moreover, $r(g)=|A|$ if and 
only if $Ag=A$. Therefore, the number of elements $g\in G$ satisfying $Ag=A$ 
is the size of the maximal subgroup $H$ such that $A$ is a union of left 
$H$-cosets. 

For a real $x$, the largest integer not exceeding $x$ and the smallest 
integer not smaller than $x$ are denoted $\lfl x\rfl$ and $\lcl x\rcl$, 
respectively. 

\begin{lemma}\label{l:leftright}
For any finite subgroup $H$ and any group elements $a$ and $b$, either  
$aH=Hb$, or $|aH\cap Hb|\le\frac12\,|H|$. 
\end{lemma}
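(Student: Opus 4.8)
The plan is to reduce to the case $a=b$ and then translate the intersection into a statement about subgroups. If $aH\cap Hb=\est$ there is nothing to prove, so assume the intersection contains an element $c$. Since $c\in aH$, the left cosets $aH$ and $cH$ coincide (cosets are equal or disjoint), and likewise $Hb=Hc$. Hence $aH\cap Hb=cH\cap Hc$, and it suffices to show that either $cH=Hc$, or $|cH\cap Hc|\le\frac12\,|H|$.

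Next I would left-multiply by $c^{-1}$, which is a bijection, to get $|cH\cap Hc|=|H\cap c^{-1}Hc|$. The set $c^{-1}Hc$ is a subgroup of $G$ conjugate to $H$, so $H\cap c^{-1}Hc$ is a subgroup of $H$, and by Lagrange's theorem its order divides $|H|$. Therefore it is either equal to $|H|$ or at most $\frac12\,|H|$. In the latter case we are done: $|aH\cap Hb|=|cH\cap Hc|=|H\cap c^{-1}Hc|\le\frac12\,|H|$.

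In the former case, $H\cap c^{-1}Hc=H$ forces $H\seq c^{-1}Hc$; since $H$ is finite and $|c^{-1}Hc|=|H|$, this containment is an equality, so $H=c^{-1}Hc$, i.e.\ $cH=Hc$. Combined with $aH=cH$ and $Hb=Hc$ this yields $aH=Hb$, the first alternative of the lemma.

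There is no real obstacle here; the only ingredients are that cosets are equal or disjoint, Lagrange's theorem, and the finiteness of $H$. The one point worth flagging is that finiteness is genuinely used in the last step: without it one could have $H\subsetneq c^{-1}Hc$ with $cH\ne Hc$ yet $cH\cap Hc=cH$, so the stated dichotomy would fail.
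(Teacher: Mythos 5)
Your proof is correct and follows essentially the same route as the paper: fix an element $g$ (your $c$) of the nonempty intersection, observe $|aH\cap Hb|=|gH\cap Hg|=|H\cap g^{-1}Hg|$, and apply Lagrange's theorem to the subgroup $H\cap g^{-1}Hg\le H$. Your closing remark about finiteness being essential in the equality case is a fair observation, though it is implicit in the paper's one-line appeal to the subgroup structure as well.
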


\begin{proof}
Assuming that $aH\cap Hb$ is nonempty, fix an element $g\in aH\cap Hb$. Then 
$a\in gH$ and $b\in Hg$, whence $|aH\cap Hb|=|gH\cap Hg|=|H\cap g^{-1}Hg|$. 
The result follows since the intersection in the right-hand side is a 
subgroup of $H$. 
\end{proof}

\begin{lemma}\label{l:sub}
Suppose that $H$ is a finite subgroup, and $a,b$ are elements of a group.  
For $aH=Hb$ to hold, it is necessary and sufficient that $a,b\in N(H)$ and 
$aH=bH$. 
\end{lemma}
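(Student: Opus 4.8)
The plan is to prove Lemma~\refl{sub}, which characterizes when a left coset $aH$ equals a right coset $Hb$ in terms of the normalizer.

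\medskip

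For the \emph{sufficiency} direction, suppose $a,b\in N(H)$ and $aH=bH$. Since $a\in N(H)$ we have $aH=Ha$, and since $aH=bH$ there is $h\in H$ with $b=ah$; then $Hb=Hah=ah H\cdot$—more cleanly: $aH = Ha$ gives $aH=Ha$, and from $b=ah$ with $h\in H$ we get $Hb=Hah$. Now $Hah=Hha'$ is awkward; instead observe $aH=bH$ together with $a\in N(H)$ forces $b=ah_0$ for some $h_0\in H$, so $Hb=Hah_0=aHh_0=aH$ (using $Ha=aH$ and $Hh_0=H$). Hence $aH=Hb$.

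\medskip

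For the \emph{necessity} direction, assume $aH=Hb$. First, intersecting with itself under the previous lemma is not needed; instead, note that $aH=Hb$ implies both cosets have the same size $|H|$ and, multiplying on the right by $b^{-1}$, that $aHb^{-1}=H$, so $aHb^{-1}$ is a subgroup. In particular $1\in aHb^{-1}$, giving $ab^{-1}\in H$—wait, that gives $a\in Hb$, which we already know. The productive observation is: from $aH=Hb$ we get $a\in Hb$, say $a=h_1 b$, and $b\in aH$, say $b=ah_2$; combining, $a=h_1 a h_2$, and then $Ha=H h_1 a h_2 = H a h_2$, while also $aH=Hb=Hah_2$, so $aH = Ha h_2$. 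Also $bH = ah_2 H = aH = Hb$, so $b\in N(H)$; and symmetrically $aH=Hb=HaH$—more directly, $aH=Hb$ and $b\in aH$ give $aH = H(aH)$? Let me restructure: since $aH=Hb$, multiply on the left by $a^{-1}$ and on the right by $H$: this shows $H = a^{-1}Hb$ and hence $a^{-1}Hb$ is a subgroup equal to $H$. Then $a^{-1}Ha = a^{-1}Hb\cdot b^{-1}a = H b^{-1}a$. Applying this identity twice (or noting $b^{-1}a = (a^{-1}b)^{-1}$ and $a^{-1}b\in H$ since $b=ah_2$) we conclude $a^{-1}Ha = Hc$ for some $c\in H$, i.e.\ $a^{-1}Ha=H$, so $a\in N(H)$; then from $aH=Ha=Hb$ and left-cancellation of the common subgroup we get $aH=bH$, and $b\in N(H)$ follows since $bH=aH=Hb$.

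\medskip

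The main obstacle is bookkeeping: the two directions both reduce to short manipulations, but one must be careful not to assume commutativity and to track on which side each multiplication happens. The cleanest route is probably to establish first the single identity ``$aH=Hb \iff a^{-1}Hb=H$ (as sets)'' and then observe that $a^{-1}Hb=H$ forces $a^{-1}b\in H$ (take the image of $1$), whence $aH=bH$; substituting $b\in aH$ back into $a^{-1}Hb=H$ yields $a^{-1}Ha=H$, i.e.\ $a\in N(H)$, and then $b\in aH=aN(H)\seq N(H)$ as $1\in aH$ forces $a\in N(H)$ already handled. I expect this identity-first approach to make both implications essentially immediate, with Lemma~\refl{leftright} not actually required here (it is the companion inequality, used elsewhere).
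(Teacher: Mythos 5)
Your proof is correct and, once the false starts are stripped away, follows essentially the same route as the paper: from $aH=Hb$ one reads off $b\in aH$ and $a\in Hb$, i.e.\ $a^{-1}b\in H$, whence $aH=bH$ and $Ha=Hb$, and then $bH=aH=Hb$ and $aH=Ha=Hb$ give $b\in N(H)$ and $a\in N(H)$; sufficiency is the trivial chain $aH=bH=Hb$. Your observation that Lemma~\refl{leftright} is not needed is also right. One small slip in the final sketch: you write $aH=aN(H)$, which is false in general (it would force $H=N(H)$); what you need and what actually works is the inclusion $aH\seq aN(H)=N(H)$, valid once $a\in N(H)$ is known. The adjacent remark ``$1\in aH$ forces $a\in N(H)$'' is also spurious ($1\in aH$ is not given and is irrelevant), but neither blemish affects the argument carried out in the main body of your proof.
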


\begin{proof}
If $aH=Hb$, then $b\in aH$ whence $a^{-1}b\in H$; equivalently, $b^{-1}a\in 
H$, or $a\in bH$. As a result, $aH=bH$, and, consequently, $Hb=bH$, so that 
$b\in N(H)$. In a similar way we get $a\in N(H)$. 	The opposite direction is 
trivial: if $a,b\in N(H)$ and $aH=bH$, then $aH=bH=Hb$. 
\end{proof}

\begin{lemma}\label{l:H2}
Suppose that $H$ is a subgroup, and $g\notin H$ is an element of a group. For 
the union $H\cup gH$ to be a subgroup, it is necessary and sufficient that 
$g^2\in H$. 
\end{lemma}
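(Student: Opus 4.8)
The plan is to argue directly from the subgroup criterion — nonempty, closed under products and under inverses — together with routine coset arithmetic, handling the two directions in turn.

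\emph{Necessity.} If $K:=H\cup gH$ is a subgroup, then $g=g\cdot 1\in gH\seq K$, so $g^2\in K=H\cup gH$ by closure. Since $g^2\in gH$ would force $g\in H$ (cancel $g$ on the left), we must have $g^2\in H$. I would also record the by-product that $H$ then has index $2$ in $K$, hence is normal in $K$, so that $gH=Hg$; this normalizing identity is precisely what the converse needs, and it is consistent with the lemma being applied only with the relevant element lying in $N(H)$.

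\emph{Sufficiency.} Assuming $g^2\in H$ (and $gH=Hg$), I would verify that $K:=H\cup gH$ is closed under multiplication by splitting $K\cdot K$ into the four products of $H,gH$ with $H,gH$: the three involving at least one factor $H$ land in $H$ or $gH$ at once (using $gH=Hg$ to handle $H\cdot gH$), while the remaining one is $gH\cdot gH=g(Hg)H=g(gH)H=g^2H=H$. Closure under inverses is then quick: from $g^{-2}=(g^2)^{-1}\in H$ one gets $g^{-1}=g\cdot g^{-2}\in gH$, hence $(gh)^{-1}=h^{-1}g^{-1}\in H\cdot gH=gH\seq K$, while $H^{-1}=H\seq K$; and $1\in K$. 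So $K$ is a subgroup.

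Neither direction presents a serious obstacle — each is a few lines. The one delicate point is the last case of the closure check, where the product of two elements of the coset $gH$ must be brought back inside $H$; this is the only place the hypothesis $g^2\in H$ (and the normalizing identity) is used, so it is the step I would write out in full.
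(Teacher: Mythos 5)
Your proof is sound, but it establishes a slightly amended statement rather than the lemma as written, and this deserves explicit comment. You correctly observed that the sufficiency direction needs the extra hypothesis $gH=Hg$ (equivalently $g\in N(H)$) --- and indeed, without it the lemma is false: take $G=S_3$, $H=\{1,(12)\}$, $g=(13)$; then $g\notin H$ and $g^2=1\in H$, yet $H\cup gH$ has four elements and so cannot be a subgroup of a group of order six. The paper's own proof of sufficiency (closure under $(a,b)\mapsto a^{-1}b$ ``is easily seen'') glosses over the mixed products $H^{-1}(gH)=HgH$ and $(gH)^{-1}H$, which return to $H\cup gH$ only if $g^{-1}Hg\seq H$; the unmixed products $H^{-1}H=H$ and $(gH)^{-1}(gH)=Hg^{-1}gH=H$ are the only ones that are automatic. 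Your necessity argument, via $g^2$ rather than the paper's $g^{-1}$, is equally short, and moreover your observation that $H$ is then normal in $K=H\cup gH$ is precisely the ingredient that repairs the converse. In the paper the lemma is only invoked with $g=a^{-1}b$ after $a^{-1}b\in N(H)$ has already been established (and once more in Case~2 of Section~4 where $g\in N(H)$ can also be extracted), so the corrected statement is what the argument actually needs --- but you should flag this as a correction to the lemma's hypotheses, not quietly fold the missing assumption into your proof.
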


\begin{proof}
If $H\cup gH$ is a subgroup, then $g^{-1}\in H\cup gH$ whence, indeed, 
$g^{-1}\in gH$, and then $g^2\in H$. Conversely, if $g^2\in H$, then $H\cup 
gH$ is easily seen to be closed under the ``skew multiplication'' 
$(a,b)\mapsto a^{-1}b$. 
\end{proof}

\begin{lemma}
Suppose that $H$ is a finite subgroup of a group $G$. For an element $g\in 
G$, the double $H$-coset $HgH$ has size $|HgH|=|H|$ if and only if $g\in  
N(H)$. 
\end{lemma}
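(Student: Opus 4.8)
The plan is to compare the double coset $HgH$ with the single left coset $gH$ that it contains, and then to reduce the whole statement to the elementary fact that a finite set cannot be properly contained in a set of the same cardinality.

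First I would note that, since $gH\seq HgH$ and both sets are finite with $|gH|=|H|$, the equality $|HgH|=|H|$ is equivalent to $HgH=gH$. Next I would rewrite $HgH=gH$ as $H(gH)=gH$, that is, as the assertion that the coset $gH$ is invariant under left translation by every element of $H$. For a fixed $h\in H$, the inclusion $hgH\seq gH$ together with $|hgH|=|gH|$ forces $hgH=gH$, which is in turn equivalent to $g^{-1}hg\in H$. Hence $HgH=gH$ holds if and only if $g^{-1}Hg\seq H$.

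Finally, since $H$ is finite and $|g^{-1}Hg|=|H|$, the inclusion $g^{-1}Hg\seq H$ is equivalent to the equality $g^{-1}Hg=H$, i.e.\ to $gH=Hg$, which is precisely the condition $g\in N(H)$. For the converse there is nothing more to prove: if $g\in N(H)$ then $HgH=H(Hg)=Hg$, a set of size $|H|$. An equally short route is to count the distinct left cosets $hgH$, $h\in H$, whose union is $HgH$: there are $[H:H\cap gHg^{-1}]$ of them, so that $|HgH|=|H|^2/|H\cap gHg^{-1}|$, and the claim again drops out immediately.

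I do not anticipate a genuine obstacle here; the only step demanding a moment's attention is the passage from the inclusion $g^{-1}Hg\seq H$ to the equality $g^{-1}Hg=H$, which really does use the finiteness of $H$ --- the statement fails for infinite $H$ --- so I would make a point of invoking finiteness explicitly at that juncture.
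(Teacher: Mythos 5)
Your argument is correct, and you were right to flag the finiteness step $g^{-1}Hg\seq H\Rightarrow g^{-1}Hg=H$ as the one place that genuinely uses $|H|<\infty$. The paper's own proof, however, is slightly slicker and sidesteps that step: it observes that \emph{both} $gH$ and $Hg$ are size-$|H|$ subsets of $S:=HgH$, so $|S|=|H|$ forces $gH=S=Hg$ at once, giving $g\in N(H)$ without ever invoking the ``same-size subgroup'' principle. You only used the inclusion $gH\seq HgH$, which is why you had to unwind $HgH=gH$ to $g^{-1}Hg\seq H$ and then appeal to finiteness of $H$ a second time. Your alternative route via the index formula $|HgH|=|H|^2/|H\cap gHg^{-1}|$ is also a standard and valid derivation, a bit heavier than necessary for this one-liner but more informative in general since it quantifies by how much $|HgH|$ exceeds $|H|$. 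All three arguments are sound; the paper's is the shortest because it exploits the symmetry $Hg\seq HgH$ that your first route leaves unused.
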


\begin{proof}
Write $S=HgH$. Then $gH\seq S$ and $Hg\seq S$. Hence, $|S|=|H|$ if and only 
if $gH=Hg$; that is, if and only if $g\in N(H)$. 
\end{proof}

We will use the box principle in the following form. 
\begin{lemma}\label{l:box}
Suppose that $A$ is a finite, nonempty subset of a group $G$. If $g_1,g_2\in 
G$ are group elements with $r(g_1)+r(g_2)>|A|$, then $g_1^{-1}g_2\in 
A^{-1}A$. 
\end{lemma}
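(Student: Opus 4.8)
The plan is to count pairs. Write $n=|A|$. For a group element $g$, recall that $r(g)$ counts the pairs $(a,b)\in A\times A$ with $a^{-1}b=g$, equivalently the pairs $(a,ag)$ with $a\in A\cap Ag^{-1}$; thus $r(g)=|A\cap Ag^{-1}|$, and likewise $r(g_i)=|A\cap Ag_i^{-1}|$ for $i=1,2$. First I would rewrite the hypothesis $r(g_1)+r(g_2)>n$ in these terms, so that the two subsets $A\cap Ag_1^{-1}$ and $A\cap Ag_2^{-1}$ of $A$ have total size exceeding $|A|$; by inclusion–exclusion they must intersect. Hence there is an element $c$ lying in $A$, in $Ag_1^{-1}$, and in $Ag_2^{-1}$ simultaneously, i.e. there exist $a_1,a_2\in A$ with $c=a_1g_1^{-1}=a_2g_2^{-1}$.

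From $a_1g_1^{-1}=a_2g_2^{-1}$ we get $g_1^{-1}g_2=a_1^{-1}a_2$ after a short manipulation (multiply on the left by $a_1^{-1}$ and on the right by $g_2$, using $g_1^{-1}=a_1^{-1}c$ and $g_2^{-1}=a_2^{-1}c$, so $g_1^{-1}g_2 = a_1^{-1}c\,(a_2^{-1}c)^{-1}\cdot$ — more directly, $c=a_1g_1^{-1}$ gives $g_1^{-1}=a_1^{-1}c$, and $c=a_2g_2^{-1}$ gives $g_2=c^{-1}a_2$, whence $g_1^{-1}g_2=a_1^{-1}c\,c^{-1}a_2=a_1^{-1}a_2$). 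Since $a_1,a_2\in A$, this exhibits $g_1^{-1}g_2\in A^{-1}A$, which is exactly the claim.

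Alternatively, one can phrase the same argument without introducing $c$ explicitly: consider the map $A\to A^{-1}A$, $x\mapsto g_1x$ when $x$ ranges over the representing set for $r(g_1)$, and compare with the analogous set for $g_2$; the pigeonhole forces a collision of the required shape. I expect no real obstacle here — the only thing to be careful about is keeping the noncommutative bookkeeping straight, in particular making sure that the element witnessing the intersection is multiplied on the correct side, since a sloppy computation could instead produce $g_2g_1^{-1}$ or $g_1g_2^{-1}$, which is not what is asserted. The symmetry observations already recorded in the preliminaries (a left coset is a right coset of a conjugate subgroup) are reassuring but are not needed for this particular lemma.
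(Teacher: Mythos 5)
Your proof is correct and follows essentially the same route as the paper: identify for each $g_i$ the set of ``first factors'' (your $A\cap Ag_i^{-1}$, which is the paper's $A_i$ with $|A_i|=r(g_i)$), apply pigeonhole to find a common element, and then cancel to get $g_1^{-1}g_2$ as a quotient of two elements of $A$. The noncommutative bookkeeping you were careful about comes out right, so there is nothing to add.
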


\begin{proof}
For $i\in\{1,2\}$, let $A_i$ be the set of all those elements $a\in A$ the 
inverse of which appears as the first factor in some representation 
$g_i=a^{-1}b$ with $b\in A$. Thus, $|A_i|=r(g_i)$, and from 
$r(g_1)+r(g_2)>|A|$ and $A_1,A_2\seq A$ it follows that $A_1$ and $A_2$ have 
a common element; that is, there are $a,b_1,b_2\in A$ such that 
$g_1=a^{-1}b_1$ and $g_2=a^{-1}b_2$. Consequently, 
$g_1^{-1}g_2=b_1^{-1}aa^{-1}b_2=b_1^{-1}b_2\in A^{-1}A$. 
\end{proof}

We need the following result of Kemperman and Wehn. 


\begin{theorem}[Kemperman-Wehn]\label{t:KW}
If $A$ and $B$ are finite, nonempty subsets of a group, then 
$|AB|\ge|A|+|B|-r(g)$ for any element $g\in AB$. 
\end{theorem}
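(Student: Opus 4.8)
The plan is to prove the inequality by induction on $|A|+|B|$, carrying the distinguished element $g\in AB$ along. Write $r:=r(g)$. Since a representation $g=ab$ with $a\in A$, $b\in B$ is determined by either of its factors, $r$ equals the common size of $A_0:=\{a\in A:a^{-1}g\in B\}$ and $B_0:=\{b\in B:gb^{-1}\in A\}$; set $A_1:=A\stm A_0$ and $B_1:=B\stm B_0$. If $A_1=\est$ then $r=|A|$, and if $B_1=\est$ then $r=|B|$; in either case (in particular whenever $\min(|A|,|B|)=1$) we have $|AB|\ge\max(|A|,|B|)\ge|A|+|B|-r$, because $AB$ contains a translate of $A$ and a translate of $B$. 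So from now on assume $A_1,B_1\ne\est$, hence also $|A|,|B|\ge2$.

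For the inductive step one tries to discard a superfluous element. Suppose some $b_1\in B_1$ satisfies $Ab_1\not\seq A(B\stm\{b_1\})$, and put $B':=B\stm\{b_1\}$. Since $B_0\seq B'$, we still have $g\in AB'$ with $r_{A,B'}(g)=r$, while $|AB|\ge|AB'|+1$; the inductive hypothesis applied to $(A,B')$ then gives $|AB|\ge1+(|A|+|B'|-r)=|A|+|B|-r$. Symmetrically, if some $a_1\in A_1$ satisfies $a_1B\not\seq(A\stm\{a_1\})B$, discard $a_1$ and conclude in the same way.

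It remains to treat the critical case, in which $A(B\stm\{b\})=AB$ for every $b\in B_1$ and $(A\stm\{a\})B=AB$ for every $a\in A_1$. Here discarding a single element of $A_1$ or $B_1$ does not shrink the product, so the argument above yields only $|AB|\ge|A|+|B|-1-r$ — one unit short. Recovering that unit is the crux of the proof: it amounts to ruling out that, say, $(A,B\stm\{b_1\})$ is \emph{extremal} for the inequality, i.e.\ to describing the cases of equality, and this is carried out in the spirit of the proof of Kneser's theorem. One brings in the finite subgroup $H:=\{x\in G:Ax=A\}$, so that $A$, and hence $AB=AHB$, is a union of left $H$-cosets, in fact a union of the cosets $Ab=AHb$ with $b$ running over a transversal of the cosets $Hb$ meeting $B$; one then shows that in the critical case $B$ is confined to a controlled number of such cosets and estimates $|AB|$ coset by coset (or passes to the induced coset structure and descends on $|AB|$). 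The mechanism is already transparent when $|B_1|=1$: if $|AB|=|A|$ then all the cosets $Ab$, $b\in B$, coincide, so $B$ lies in a single coset $Hb_1$; but then $gb^{-1}\in A$ for every $b\in B$, i.e.\ $B_1=\est$, a contradiction — hence $|AB|\ge|A|+1=|A|+|B|-r$. The main obstacle is precisely this critical case: as with Kneser's theorem, the bound is attained only on unions of cosets, and locating the right subgroup and running the descent is the technical heart, while the reductions above and the degenerate cases are routine.
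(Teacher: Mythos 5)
The paper does not actually prove Theorem~\ref{t:KW}; it is quoted as a known result, with the reader referred to Kemperman's paper~\cite{b:k} for a proof. So there is no in-paper argument to compare against, and the question reduces to whether your proposal stands on its own.

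Your setup and the non-critical part of the induction are clean and correct: the identification $r=|A_0|=|B_0|$, the dispatch of the degenerate cases $A_1=\varnothing$ or $B_1=\varnothing$, and the observation that if deleting some $b_1\in B_1$ strictly shrinks $AB$ then the inductive hypothesis applied to $(A,B\setminus\{b_1\})$ finishes the job (since $g$ and $r(g)$ are preserved). That much is a genuine reduction.

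However, the critical case --- where $A(B\setminus\{b\})=AB$ for every $b\in B_1$ and $(A\setminus\{a\})B=AB$ for every $a\in A_1$ --- is not resolved; it is only sketched. You correctly note that the naive induction loses exactly one unit there, and you correctly identify the stabilizer $H=\{x:Ax=A\}$ as the relevant subgroup, but the phrases ``one then shows that in the critical case $B$ is confined to a controlled number of such cosets'' and ``passes to the induced coset structure and descends'' are placeholders, not arguments: $H$ need not be normal, the translates $AHb_i$ over a transversal of $H$-cosets meeting $B$ are not disjoint in general, and it is not clear what quantity one descends on or how $r(g)$ behaves under the descent. The only subcase you actually close is $|B_1|=1$, and that argument (all $Ab$ coincide, so $B\subseteq Hb_1$, so $B_1=\varnothing$) does not extend as written once $|B_1|\ge 2$. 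Since the critical case is precisely where the content of the Kemperman--Wehn inequality lies (away from it the statement reduces to $|AB|\ge\max(|A|,|B|)$ plus a trivial induction), this is a genuine gap rather than a cosmetic omission. The proposal is an honest and well-organized outline, but it is not a proof.
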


Quoting from~\refb{o1},
\begin{quotation}
``Theorem~\reft{KW} goes back to results of L.~Moser and P.~Scherk in the 
case  of abelian groups, and was proved for nonabelian groups by 
J.~H.~B.~Kemperman and (independently) D.~F.~Wehn. For proof see Kemperman's 
paper~\refb{k}.'' 
\end{quotation}

\section{Proof of Theorem~\reft{main}: sufficiency}\label{s:suff}

If $G$ is a group, $H$ is a finite subgroup of $G$, and $A$ is a subset of  
$G$ contained in an $H$-coset and satisfying $|A|>\frac35\,|H|$, then  
$A^{-1}A=H$ by the box principle, whence $|A^{-1}A|<\frac53\,|A|$. Thus, 
condition (i) of the theorem is sufficient for $A$ to satisfy  
$|A^{-1}A|<\frac53\,|A|$, and it also implies the corresponding part of the 
last assertion of the theorem. We now prove a similar result for 
condition~(ii). 

\begin{proposition}\label{p:NH}
Let $H$ be a finite subgroup of a group $G$, and suppose that $A\seq aH\cup 
bH$  where $a,b\in G$ are elements with $a^{-1}b\in N(H)$ and 
$(a^{-1}b)^2\notin H$. If $|A|>\frac95\,|H|$, then $|A^{-1}A|<\frac53\,|A|$; 
moreover, in this case $A^{-1}A$ is a disjoint union of $H$ and  two double 
$H$-cosets of size $|H|$ each. 
\end{proposition}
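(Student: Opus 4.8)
The plan is to normalize the configuration and then compute $A^{-1}A$ block by block. Replacing $A$ by $a^{-1}A$ changes neither $A^{-1}A$ nor any of the hypotheses, so I may assume $a=1$ and $b=g$, where $g:=a^{-1}b\in N(H)$ satisfies $g^2\notin H$ and $A\seq H\cup gH$. Since $g^2\notin H$ we have $g\notin H$, so $H$ and $gH$ are distinct, hence disjoint, cosets. Put $P:=A\cap H$ and $Q:=A\cap gH$, so that $A=P\cup Q$ and $|P|+|Q|=|A|>\frac95\,|H|$; as $|P|,|Q|\le|H|$, this forces $|P|,|Q|>\frac45\,|H|$, in particular both exceed $\frac12\,|H|$.

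The only tool needed is the elementary fact that if $S,T\seq H$ with $|S|+|T|>|H|$, then $ST=H$ (for $h\in H$ the subsets $S$ and $hT^{-1}$ of $H$ have total size exceeding $|H|$, hence meet --- this is the box principle). Applying it I would evaluate the four pieces of $A^{-1}A=P^{-1}P\cup P^{-1}Q\cup Q^{-1}P\cup Q^{-1}Q$. Since $P\seq H$ and $2|P|>|H|$, the fact with $S=P^{-1}$, $T=P$ gives $P^{-1}P=H$. Writing $Q=gR$ with $R\seq H$, $|R|=|Q|>\frac12\,|H|$, gives $Q^{-1}Q=R^{-1}R=H$. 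For the cross term, use $g\in N(H)$: for $p\in P$ and $q=gr\in Q$ one has $p^{-1}q=g\,(g^{-1}p^{-1}g)\,r$ with $g^{-1}p^{-1}g\in H$, so $P^{-1}Q=g\,(g^{-1}P^{-1}g)R=gH$ by the fact above applied inside $H$ to the sets $g^{-1}P^{-1}g$ and $R$, of sizes $|P|$ and $|Q|$, each exceeding $\frac12\,|H|$. Taking inverses, $Q^{-1}P=(P^{-1}Q)^{-1}=Hg^{-1}=g^{-1}H$, again since $g^{-1}\in N(H)$. Hence $A^{-1}A=H\cup gH\cup g^{-1}H$.

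To finish, I would check that these three cosets are pairwise disjoint and extract the conclusions. They are pairwise distinct: $H\ne gH$ and $H\ne g^{-1}H$ since $g\notin H$, while $gH\ne g^{-1}H$ precisely because $g^2\notin H$ (indeed $gH=g^{-1}H$ is equivalent to $g^{-2}\in H$, hence to $g^2\in H$). Distinct cosets are disjoint, so $|A^{-1}A|=3|H|$. Since $g\in N(H)$, both $gH=HgH$ and $g^{-1}H=Hg^{-1}H$ are double $H$-cosets of size $|H|$; together with $H=H1H$ this exhibits $A^{-1}A$ as a disjoint union of $H$ and two double $H$-cosets of size $|H|$ each. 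Finally, $|A^{-1}A|=3|H|<\frac53\,|A|$ is just a restatement of the hypothesis $|A|>\frac95\,|H|$.

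This argument is essentially a bookkeeping computation and presents no genuine difficulty; the two places demanding care are the use of the normalizer condition to rewrite the cross terms $P^{-1}Q$ and $Q^{-1}P$ as products of large subsets of $H$, and the use of $g^2\notin H$ at exactly the point where one needs $gH$ and $g^{-1}H$ to be distinct --- this is what makes $|A^{-1}A|$ equal to $3|H|$ rather than $2|H|$.
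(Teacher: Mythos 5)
Your proof is correct and follows essentially the same path as the paper's: decompose $A$ into two coset pieces, evaluate the four blocks of $A^{-1}A$ via the box principle, exploit the normalizer hypothesis to collapse the cross terms to single cosets, and invoke $(a^{-1}b)^2\notin H$ for disjointness. The only cosmetic differences are that you normalize $a=1$, that you observe $g^2\notin H$ already forces $g\notin H$ (so you get coset disjointness without appealing to the size hypothesis, which the paper uses for that purpose), and that you evaluate $P^{-1}P$ and $Q^{-1}Q$ separately rather than only needing their union to be $H$.
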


\begin{proof}
The assumption $|A|>\frac95\,|H|$ implies that the cosets $aH$ and $bH$ are 
disjoint. We write $A=aX\cup bY$ with $X,Y\seq H$ and notice that 
\begin{multline}\label{e:1902a}
  A^{-1}A = (X^{-1}a^{-1}\cup Y^{-1}b^{-1})(aX\cup bY) \\
		= ((X^{-1}X)\cup(Y^{-1}Y)) \cup(X^{-1}a^{-1}bY)\cup(Y^{-1}b^{-1}aX).
\end{multline}
Since $|X|+|Y|=|A|>|H|$, we have either $|X|>\frac12\,|H|$, or 
$|Y|>\frac12\,|H|$;  accordingly, by the box principle, either $X^{-1}X=H$, 
or $Y^{-1}Y=H$. Thus, $(X^{-1}X)\cup(Y^{-1}Y)=H$. Furthermore, since 
$a^{-1}b\in N(H)$, we have $a^{-1}bY\seq a^{-1}bH=Ha^{-1}b$; consequently, 
there is a subset $Y'\seq H$ such that $a^{-1}bY=Y'a^{-1}b$, and then 
$X^{-1}a^{-1}bY=X^{-1}Y'a^{-1}b=Ha^{-1}b$, as $X^{-1}Y'=H$ in view of 
$|X^{-1}|+|Y'|=|X|+|Y|>|H|$. Therefore $|X^{-1}a^{-1}bY|=|H|$. Taking the 
inverses, we get $|Y^{-1}b^{-1}aX|=|H|$. Hence, $|A^{-1}A|\le 
3|H|<\frac53\,|A|$. Next, $X^{-1}a^{-1}bY=Ha^{-1}b=a^{-1}bH$ shows that 
$X^{-1}a^{-1}bY$ is a double $H$-coset, and so is its inverse 
$Y^{-1}b^{-1}aX=Hb^{-1}a=b^{-1}aH$. Finally, the two double $H$-cosets are 
disjoint from $H$ and from each other thanks to the assumption 
$(a^{-1}b)^2\notin H$.
\end{proof}

Next, we prove a lemma that provides a simple criterion for a given set to  
satisfy conditions (i) and (ii) of Theorem~\reft{main}; this lemma will be 
used in the proof of necessity in the next section. 

\begin{lemma}\label{l:NH}
Let $H$ be a finite subgroup of a group $G$, and suppose that $A\seq aH\cup 
bH$ where $a,b\in G$. If $|A|>\frac95\,|H|$ and $|A^{-1}A|\le 3|H|$, then $A$ 
satisfies either condition (i), or condition (ii) of Theorem~\reft{main}, 
according to whether $(a^{-1}b)^2\in H$ or $(a^{-1}b)^2\notin H$. 
\end{lemma}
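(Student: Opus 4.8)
The plan is to set $g:=a^{-1}b$ and argue according to whether $g\in N(H)$. First I would dispose of a trivial case: if $aH=bH$, then $A$ is contained in a single $H$-coset and, since $|A|>\frac95\,|H|>\frac35\,|H|$, condition (i) holds; here $g\in H$ so $g^2\in H$, consistent with the claimed dichotomy. So assume $aH\neq bH$, i.e.\ $g\notin H$; then by the size assumption the cosets $aH$ and $bH$ are disjoint. As in the proof of Proposition~\refp{p:NH}, write $A=aX\cup bY$ with $X,Y\seq H$, so that $|X|+|Y|=|A|>\tfrac95|H|>|H|$, and expand
\begin{equation*}
  A^{-1}A = \big((X^{-1}X)\cup(Y^{-1}Y)\big)\cup(X^{-1}gY)\cup(Y^{-1}g^{-1}X).
\end{equation*}
The box principle gives $(X^{-1}X)\cup(Y^{-1}Y)=H$ (at least one of $X,Y$ has size exceeding $\tfrac12|H|$), so $H\seq A^{-1}A$ and the remaining two pieces $X^{-1}gY$ and $Y^{-1}g^{-1}X$ are inverses of each other.

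The heart of the matter is to show that the hypothesis $|A^{-1}A|\le 3|H|$ forces $g\in N(H)$, and hence $aH=Hb$ by Lemma~\refl{l:sub} (note $aH=bH$ fails, but what we get is $gH=Hg$). The idea: $X^{-1}gY\seq X^{-1}gH$, and I claim $|A^{-1}A|\le 3|H|$ together with $H\seq A^{-1}A$ forces $|X^{-1}gY|\le |H|$, i.e.\ $X^{-1}gY$ is a single left $H$-coset's worth of elements (more precisely, contained in $gH$-translates that collapse). The cleaner route is via Theorem~\refp{p:main}'s toolbox: apply the Kemperman--Wehn theorem (Theorem~\reft{KW}) to the product $X^{-1}\cdot(gY)$. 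Since $H\seq A^{-1}A$ and $X^{-1}gY$ and $Y^{-1}g^{-1}X$ are disjoint from... — actually the cleanest argument: $X^{-1}gY$ and its inverse $Y^{-1}g^{-1}X$ are each contained in $A^{-1}A\stm$(possibly $H$), and if they were both to have size $>|H|$ we would exceed $3|H|$ unless they heavily overlap $H$ or each other. I would show that $X^{-1}gY$ avoids $H$: if $x^{-1}gy=h\in H$ then $g=xhy^{-1}\in HHH=H$... wait, $x,y,h\in H$ gives $g\in H$, contradiction. So $X^{-1}gY\cap H=\est$, and likewise $Y^{-1}g^{-1}X\cap H=\est$. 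Hence $|X^{-1}gY|+|Y^{-1}g^{-1}X|\le |A^{-1}A|-|H|\le 2|H|$, and since these two sets are inverses of one another they have equal size, so each has size at most $|H|$. But $X^{-1}gY\supseteq x^{-1}gY$ for any fixed $x\in X$, and $|x^{-1}gY|=|Y|$; similarly it contains $X^{-1}gy$ of size $|X|$. More usefully, $X^{-1}gY\seq X^{-1}gH$; fixing $y\in Y$ we get $X^{-1}gy\seq X^{-1}gY$, and letting $x$ range, $X^{-1}gy$ has size $|X|$. Combining, $|X^{-1}gY|\ge\max(|X|,|Y|)>\tfrac12|H|$.

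Now the punchline: $X^{-1}gY\seq X^{-1}gH$. Write $X^{-1}g = \{x^{-1}g: x\in X\}$; this set has size $|X|>\tfrac12|H|$ and lies in $Hg=...$? Not necessarily. Instead I would use Lemma~\refl{l:leftright}: either $X^{-1}g$ lies in a set comparable to a coset, or... The decisive step I expect to be the main obstacle is showing $|X^{-1}gY|\le|H| \Rightarrow g\in N(H)$. Here is the argument: $X^{-1}gY \supseteq X^{-1}gy$ and $\supseteq x^{-1}gY$; but also $X^{-1}gY$ is \emph{left-invariant} under $X^{-1}X$-type products — more precisely, since $|X|,|Y|>\tfrac12|H|$ we have $XX^{-1}=X^{-1}X=H$ and $YY^{-1}=H$... hmm, $X^{-1}X=H$ needs $|X|>\tfrac12|H|$, which holds for at least one of $X,Y$; WLOG say $|X|>\tfrac12|H|$ so $X^{-1}X=H$. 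Then $H\cdot(x_0^{-1}gY) = X^{-1}X x_0^{-1} g Y$; this need not simplify. The right approach: $X^{-1}gY \seq H g H$ trivially (as $X,Y\seq H$), and $HgH$ is a union of left cosets of $H$, namely $HgH=\bigcup_{h}hgH$ has size a multiple of $|H|$ exceeding $|H|$ iff $g\notin N(H)$. Since $X^{-1}gY\seq HgH$ has size $\le|H|$ but also $\ge\max(|X|,|Y|)$, and since (by the Kemperman--Wehn / box argument applied inside $HgH$) a set of this size in $HgH$ ... I would finish by: if $g\notin N(H)$ then $|HgH|\ge 2|H|$, and I claim any subset $S$ of $HgH$ with $HS\ne S$ or $SH\ne S$ that arises as $X^{-1}gY$ with $|X|,|Y|>\tfrac12|H|$ must actually satisfy $HS=SH=S$ (because $X^{-1}(X^{-1}gY)\supseteq ...$, using $X^{-1}X=H$ when $|X|>\tfrac12|H|$: then $H\cdot(X^{-1}gY)=(X^{-1}X)X^{-1}gY\supseteq X^{-1}gY$ — no wait that gives $HX^{-1}gY\supseteq X^{-1}X x^{-1}gY = Hx^{-1}gY$, not obviously $X^{-1}gY$). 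This left-closure argument is the crux; once $X^{-1}gY$ is shown to be a full double coset it has size $\ge 2|H|$ contradicting $\le|H|$, forcing $g\in N(H)$. Then $aH=Hb$ by Lemma~\refl{l:sub} combined with $g\in N(H)$ and disjointness handled directly, $(a^{-1}b)^2\notin H$ since $g\notin H$ and ... — we need $g^2\notin H$: if $g^2\in H$ then $H\cup gH$ is a subgroup by Lemma~\refl{l:H2}, $A$ is contained in a coset of it of size $2|H|$, and $|A|>\tfrac95|H|>\tfrac35\cdot 2|H|$ gives condition (i); this matches the stated dichotomy. If $g^2\notin H$, all hypotheses of Proposition~\refp{p:NH}'s setup are met, giving condition (ii). I would present the argument in this order: trivial coset case; reduction to $g\notin H$ with disjoint cosets; the expansion of $A^{-1}A$ and $H\seq A^{-1}A$; the key lemma $|A^{-1}A|\le 3|H|\Rightarrow g\in N(H)$ via the double-coset size bound; then split on $g^2\in H$ vs.\ $g^2\notin H$ to land in (i) or (ii) respectively.
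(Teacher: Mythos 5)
Your overall plan mirrors the paper's: decompose $A=aX\cup bY$, expand $A^{-1}A$ via \eqref{e:1902a}, show $g:=a^{-1}b\in N(H)$, and then split on whether $g^2\in H$ (invoking Lemma~\ref{l:H2}, condition (i)) or $g^2\notin H$ (condition (ii)). The final split and the reduction to the disjoint-coset case are fine and match the paper. The problem is in the middle, and it is not a presentational issue but a genuine gap.

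The step ``$|X^{-1}gY|+|Y^{-1}g^{-1}X|\le |A^{-1}A|-|H|\le 2|H|$, and since these two sets are inverses of one another they have equal size, so each has size at most $|H|$'' is wrong: what the containment $H\sqcup(X^{-1}gY\cup Y^{-1}g^{-1}X)\seq A^{-1}A$ gives is a bound on the \emph{union}, $|X^{-1}gY\cup Y^{-1}g^{-1}X|\le 2|H|$, not on the sum. Since the two sets are mutual inverses they can coincide (e.g., whenever $g^{-1}\in HgH$), and then you only get $|X^{-1}gY|\le 2|H|$ — too weak to run your double-coset argument. In fact the desired inequality $|X^{-1}gY|\le|H|$ is exactly equivalent to $g\in N(H)$ here, so the upper-bound route begs the question. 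The paper goes the other way around: it assumes $g\notin N(H)$ and derives a \emph{lower} bound. Namely, $X^{-1}g\seq Hg$ and $gY\seq gH$; since $g\notin N(H)$, Lemma~\ref{l:leftright} gives $|Hg\cap gH|\le\frac12|H|$; and after normalizing $1\in X\cap Y$ so that $X^{-1}g\cup gY\seq X^{-1}gY$, one gets $|X^{-1}gY|\ge|X|+|Y|-\frac12|H|>|H|$, likewise $|Y^{-1}g^{-1}X|>|H|$, and then a contradiction with $|A^{-1}A|\le3|H|$ via \eqref{e:1902a}. Your ``left-closure'' paragraph, which you flag yourself as the crux, never gets off the ground precisely because it is aiming at the upper bound $|X^{-1}gY|\le|H|$ that was never established; the attempted $H\cdot(X^{-1}gY)=X^{-1}gY$ closure also does not follow from $X^{-1}X=H$ alone. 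Replace the upper-bound argument with the paper's contrapositive lower-bound argument via Lemma~\ref{l:leftright}, and the rest of your outline goes through.
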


\begin{proof}
As in the proof of Proposition~\refp{NH}, we write $A=aX\cup bY$ with  
$X,Y\seq H$, and use~\refe{1902a}. If $a^{-1}b\notin N(H)$, then $Ha^{-1}b\ne 
a^{-1}bH$ by the definition of the normalizer subgroup; hence, 
$$ |X^{-1}a^{-1}b\cap a^{-1}bY| \le |Ha^{-1}b\cap a^{-1}bH|\le\frac12\,|H| $$
by Lemma~\refl{H2}. Without loss of generality, we assume $a,b\in A$  whence 
$1\in X\cap Y$. Consequently, both $X^{-1}a^{-1}b$ and $a^{-1}bY$ lie in 
$X^{-1}a^{-1}bY$, and we conclude that 
$$ |X^{-1}a^{-1}bY| \ge |X^{-1}a^{-1}b\cup a^{-1}bY|
= |X|+|Y|-|X^{-1}a^{-1}b\cap a^{-1}bY| > |H|. $$ Taking the inverses, we get 
$|Y^{-1}b^{-1}aX|>|H|$, and then from~\refe{1902a} we obtain 
$|A^{-1}A|>3|H|$, contradicting the assumptions. Thus, $a^{-1}b\in N(H)$. By 
Lemma~\refl{sub}, the set $F:=H\cup(a^{-1}b)H$ is a subgroup if and only if 
$(a^{-1}b)^2\in H$. In this case $A\seq aF$ and 
$|A|>\frac95\,|H|=\frac9{10}\,|F|>\frac35\,|F|$ so that $A$ satisfies 
condition (i). Finally, if $(a^{-1}b)^2\notin H$, then $A$ satisfies 
condition (ii). 
\end{proof}

\section{Proof of Theorem~\reft{main}: necessity}\label{s:proof}

Let $A\seq G$ be a finite subset with $|A^{-1}A|<\frac53\,|A|$, and suppose  
that the assertion is true for all sets $\cA\seq G$ satisfying either 
$|\cA^{-1}\cA|<|A^{-1}A|$, or $|\cA^{-1}\cA|=|A^{-1}A|$ and $|\cA|>|A|$. We 
show that $A$ is contained ether in a coset, or in a union of two cosets, as 
specified in the conditions (i) and (ii) of the theorem. 

We write $Q:=A^{-1}A$; thus, $|A|>\frac35\,|Q|$. 

Recall, that for an element $g\in G$, we have denoted by $r(g)$ the number of  
representations $g=a^{-1}b$ with $a,b\in A$. By Theorem~\reft{KW}, 
\begin{equation*}\label{e:rgQ}
	r(g) \ge 2|A| - |Q|, \quad g\in Q.
\end{equation*}

Let $Q^+:=\{g\in Q\colon r(g)>|Q|-|A|\}$. We notice that $Q^+$ is nonempty 
as, for instance, it contains the identity element. Also, $Q^+$ is stable 
under inversion. 

For any $g\in Q$ and $g_0\in Q^+$ we have 
  $$ r(g)+r(g_0)>(2|A|-|Q|)+(|Q|-|A|) = |A|.$$ 
Hence, $g_0^{-1}g\in Q$ by Lemma~\refl{box}, implying $g\in g_0Q$. It follows 
that $g_0Q=Q$ for any $g_0\in Q^+$. Therefore, $Q^+Q=Q$ and, considering the 
inverses, $QQ^+=Q$. Letting $F:=\<Q^+\>$, we furthermore conclude that 
$QF=FQ=Q$. As a result, 
\begin{equation}\label{e:1002a}
  (AF)^{-1}(AF) = FA^{-1}AF = FQF = FQ = Q = A^{-1}A.
\end{equation} 

From these equalities and by the choice of $A$, either $AF=A$, or there is a 
finite subgroup $H\le G$ such that one of the following holds: 
\begin{itemize}
\item[--] $AF$ is contained in a left $H$-coset, $|AF|>\frac35\,|H|$, and    
	$(AF)^{-1}(AF)=H$;
\item[--] $AF$ meets exactly two left $H$-cosets, $|AF|>\frac95\,|H|$, and  
    $(AF)^{-1}(AF)$ is a disjoint union of $H$ and two double $H$-cosets of 
    size $|H|$ each. 
\end{itemize} 
In the first case, recalling~\refe{1002a} we get $Q=(AF)^{-1}(AF)=H$; as a 
result, $A$ is contained in a single left $H$-coset, and 
$|A|>\frac35\,|Q|=\frac35\,|H|$; thus, $A$ satisfies condition (i). 

In the second case, with \refe{1002a} in mind, $|Q|=|(AF)^{-1}(AF)|=3|H|$  
showing that $|A|>\frac35\,|Q|=\frac95\,|H|$. By Lemma~\refl{NH}, the set $A$ 
satisfies condition (i) or condition (ii). 

Having ruled out the exceptional cases where $AF$ is contained in a single  
coset, or in a union of two cosets, we proceed with the proof using the 
additional assumption 
\begin{equation}\label{e:AF}
  AF=A, \quad F=\<Q^+\>. 
\end{equation}  
Thus, $A$ is a union of left $F$-cosets, and so is $Q=\cup_{a\in A}a^{-1}A$. 
It follows that $F\seq Q$. Indeed, we have $F=Q^+$: here the inclusion 
$Q^+\seq F$ is trivial, while $F\seq Q^+$ follows by observing that if $g\in 
F$, then $r(g)=|A|>|Q|-|A|$ by~\refe{AF}, whence $g\in Q^+$. 

As we have just observed, if $g\in F$, then $r(g)=|A|$. Conversely, if $g\in 
Q$ is an element with $r(g)=|A|$, then $r(g)>|Q|-|A|$ showing that $g\in 
Q^+=F$. As a bottom line, $r(g)=|A|$ if and only if $g\in F$.

If $g\in G$ is an element with $r(g)>|Q|-|A|$, then $g\in Q^+=F$, whence, 
indeed, $r(g)=|A|$. Thus, we have 
  $$ 2|A|-|Q| \le r(g) \le |Q|-|A| $$ 
for all elements $g\in Q$ with $r(g)<|A|$. We remark that the first of the 
two inequalities is just Theorem~\reft{KW}, but the second one is new and, in 
our view, is quite amazing. 

We write $k:=|A|/|F|$. If $k=1$, then $A$ is a single left $F$-coset; 
therefore $A$ satisfies condition (i). If $k=2$, then $A$ is a union of two 
left $F$-cosets and $|A^{-1}A|<\frac53|A|<4|F|$; therefore, applying 
Lemma~\refl{NH}, we conclude that $A$ satisfies condition (ii). Suppose thus 
that $k\ge 3$. 

Since $A$ and $Q$ are unions of left $F$-cosets, both $|A|$ and $|Q|$ are 
divisible by $|F|$. From this observation and 
$|Q|<\frac53\,|A|=\frac{5}3\,k|F|$, we get 
\begin{equation}\label{e:QF}
  |Q|\le\lpr\lcl\frac{5}{3}\,k\rcl-1\rpr\,|F|.
\end{equation} 
Furthermore, to any representation $g=a^{-1}b$ with $a,b\in A$ there 
correspond $|F|$ representations $g=(fa)^{-1}(fb),\ f\in F$. (Notice that 
these representations are ``legal'' in the sense that both $af$ and $bf$ lie 
in $A$.) Therefore, also $r(g)$ is divisible by $|F|$, for any $g\in G$. 
Moreover, since $g$ is constant on any left $F$-coset, for any given positive 
integer $m$, the number of elements $g\in Q$ with $r(g)=m$ is  divisible by 
$|F|$;  

Let $Q_0$ and $Q_1$ denote the sets of all those elements $g\in Q$ with
$r(g)\le\frac12\,|A|$ and with $r(g)>\frac12\,|A|$, respectively. We write
$N_0:=|Q_0|$ and $N_1:=|Q_1|$ and define
  $$ \sig_0 := \sum_{g\in Q_0}r(g)
                              \ \text{and}\ \sig_1:=\sum_{g\in Q_1}r(g); $$
thus, $N_0+N_1=|Q|$, $\sig_0+\sig_1=|A|^2$, and $N_0,N_1,\sig_0$, and 
$\sig_1$  are all divisible by $|F|$. The sum $\sig_0$ has $N_0$ terms, each 
of them divisible by $|F|$ and not exceeding $\frac12\,|A|=\frac12\,k|F|$; 
therefore, $\sig_0\le\lfl\frac12\,k\rfl|F|N_0$. The sum $\sig_1$ has $N_1$ 
terms, of them $|F|$ are equal to $|A|$, and each of the remaining $N_1-|F|$ 
terms does not exceed $|Q|-|A|$. 
Therefore,
  $$ \sig_1 \le |F||A| + (N_1-|F|)(|Q|-|A|) 
                                  = 2|F||A|-|Q||F| + N_1|Q| - N_1|A|. $$
Letting $n:=N_1/|F|$ and $q:=|Q|/|F|$, we obtain
\begin{align*}
  |A|^2  &\le \lfl\frac 12\,k\rfl|F|N_0 
                                  + 2|F||A|-|Q||F| + N_1|Q| - N_1|A|, \\
  k^2|F| &\le \lfl\frac12\,k\rfl (N_0+N_1) +2|A| - |Q| 
                 + \left(q-\lfl\frac12k\rfl\right)N_1 - kN_1, \\
  k^2    &\le \lpr\lfl\frac12\,k\rfl -1\rpr\,q + 2k
                            + \left(q-k-\lfl\frac12\,k\rfl\right)n.
\end{align*}
Since $q\le\lcl\frac53\,k\rcl-1$ by~\refe{QF}, we derive that
  $$ k^2 \le \lpr \lfl\frac12\,k\rfl -1 \rpr 
     \lpr \lcl\frac53\,k\rcl -1 \rpr + 2k 
        + \lpr\lcl\frac53\,k\rcl-1 -  k - \lfl  \frac12\,k\rfl  \rpr n. $$
A routine analysis shows that for $k\ge 3$, the last inequality is false if 
$n\le k$. (Hint: exact computation for $3\le k\le 6$, substituting $n=k$ and 
using the crude estimates $\lfl k/2\rfl\le k/2$ and $\lcl 
5k/3\rcl\le(5k+2)/3$  for $k\ge 7$.) Therefore $n\ge k+1$; that is, 
\begin{equation}\label{e:01A}
  |Q_1| = N_1 \ge |A|+|F| \quad (k\ge 3).
\end{equation} 

From Lemma~\refl{box} and the definition of the set $Q_1$, we have 
$g_1^{-1}g_2\in Q$ for any $g_1,g_2\in Q_1$. Consequently, $Q_1^{-1}Q_1\seq 
Q$ whence, by the choice of $A$, there is a finite subgroup $H\le G$ such 
that one of the following holds: 
\begin{itemize}
\item[1.\,] $Q_1$ is contained in a left $H$-coset and
    $|Q_1|>\frac35\,|H|$;
\item[2.\,] $Q_1$ meets exactly two left $H$-cosets and  
    $|Q_1|>\frac95\,|H|$.    
\end{itemize} 

We investigate these two cases separately.

\smallskip\noindent{\bf Case 1:}\ There is a finite subgroup $H\le G$ such  
that $Q_1$ is contained in a left $H$-coset and $|Q_1|>\frac35\,|H|$. We have   
\begin{equation}\label{e:HQ1}
  |A| < |Q_1|\ \text{and}\ Q_1\seq H = Q_1^{-1}Q_1 \seq Q;
\end{equation}
here the inequality follows from~\refe{01A}, the first inclusion from $1\in 
Q_1$, the equality from  $|Q_1|>\frac35\,|H|$ and the box principle, and the 
second inclusion from Lemma~\refl{box}. 

Consider the coset decomposition $A=A_1\longu A_n$ where $n=|AH|/|H|$ and 
$A_1\longc A_n$ are nonempty and reside in pairwise distinct left $H$-cosets. 
We number the sets $A_i$ so that $|A_1|=\min\{|A_i|\colon 1\le i\le n\}$. Fix 
$a_1\in A_1$. In view of $a_1^{-1}A_2\longu a_1^{-1}A_n\seq Q\stm H$ 
and~\refe{HQ1}, 
  $$ |Q| \ge |H| + (|A_2|\longp|A_n|) 
        > |A|+\lpr1-\frac1n\rpr\,|A| = \lpr2-\frac1n\rpr\,|A|. $$ 
Since $|Q|<\frac53\,|A|$, we conclude that $n=1$ or $n=2$. If $n=1$, then $A$ 
resides in a single $H$-coset; moreover, $|A|>\frac35\,|Q|>\frac35\,|H|$, 
showing that $A$ satisfies condition (i). 

Suppose now that $n=2$. Fix $a_1\in A_1$ and $a_2\in A_2$. Then 
  $$ \frac53\,|A| > |Q| = |H| + |Q\stm H| \ge |H| + |a_1^{-1}A_2| 
           = |H| + (|A|-|A_1|)  $$
whence $|A_1|>|H|-\frac23\,|A| \ge \frac13\,|A|$. Similarly, from
  $$ \frac53\,|A| > |Q| = |H| + |Q\stm H| \ge |H| + |a_1^{-1}A_2| 
           = |H| + |A_2|  $$
we obtain $|A_2|<\frac23\,|A|$. Therefore, 
\begin{equation}\label{e:halfH}
  \frac13\,|H| <|A_1|\le\frac12\,|A|\le|A_2|<\frac23\,|A|.           
\end{equation}
Consider the set $S:=(A_1\times A_2)\cup(A_2\times A_1)$ and the mapping 
$\phi\colon S\to Q$ defined by $\phi(a,b):=a^{-1}b$. Since the image 
$\Im(\phi)$ is disjoint from $H$, we have $\Im(\phi)\seq Q\stm Q_1$. By the 
definition of the set $Q_1$, every element of $\Im(\phi)$ has at most 
$\frac12\,|A|$ inverse images in $S$. As a result, 
  $$ |Q|-|Q_1|\ge\Im(\phi) 
                      \ge \frac{|S|}{|A|/2} = 4\frac{|A_1||A_2|}{|A|}. $$ 
Comparing this estimate with~\refe{01A} and with the assumption 
$|Q|<\frac53\,|A|$, we obtain $|A|^2>6|A_1||A_2|$. This leads to 
$|A_2|>(2+\sqrt 3)|A_1|$, contradicting~\refe{halfH}.  

\smallskip\noindent{\bf Case 2:}\ There is a finite subgroup $H\le G$ such 
that $Q_1$ meets exactly two left $H$-cosets and $|Q_1|>\frac95\,|H|$. Since 
$1\in Q_1$, we can write $Q_1=B_0\cup B_1$ where $B_0\seq H$ and $B_1\seq gH$ 
with some $g\in G\stm H$. From $B_1=Q_1\stm H$ and $Q_1^{-1}=Q_1$ we get 
$B_1^{-1}=Q_1^{-1}\stm H=Q_1\stm H=B_1\seq gH$. Thus, $B_1\seq gH\cap 
Hg^{-1}$. By Lemma~\refl{leftright}, and in view of 
  $$ |B_1|=|Q_1|-|B_0|\ge|Q_1|-|H|>\frac45\,|H| $$
we have $gH=Hg^{-1}$; that is, $H=gHg$, and it is easily seen that $H\cup gH$ 
is a subgroup. Moreover, $Q_1=B_0\cup B_1\seq H\cup gH$ and 
$|Q_1|>\frac95\,|H|>\frac35\,|H\cup gH|$. This takes us back to the Case~1 
considered above. 

\section{Concluding remarks}\label{s:conjecture}

What is the structure of a finite set $A$ with $\frac53\,|A|<|A^{-1}A|<2|A|$? 
We make the following conjecture. 
\begin{conjecture}\label{j:main}
Let $A$ be a finite subset of a group $G$, and let $n$ be a positive integer. 
If
  $$ 
                         |A^{-1}A| < \lpr2-\frac1{n+1}\rpr\,|A|, $$ 
then there are a finite subgroup $H\le G$ and a subset $A_0\seq A$ of size 
$|A_0|\le n$ contained in a single left $N(H)$-coset such that $A\seq A_0H$, 
$|A_0H|=|A_0||H|$, and $|A|>\lpr2-\frac1{n+1}\rpr^{-1}(2|A_0|-1)|H|$. 

Moreover, $A^{-1}A=A_0^{-1}A_0H$ and $|A^{-1}A|=(2|A_0|-1)H$.
\end{conjecture}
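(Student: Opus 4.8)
The plan is to run, in a substantially amplified form, the argument of Section~\refs{proof}, which is essentially the case $n=2$ of the conjecture (the case $n=1$ reduces to it: $|A^{-1}A|<\frac32\,|A|$ rules out alternative (ii) of Theorem~\reft{main} on size grounds). So fix $n\ge 2$ and induct on the pair $(|A^{-1}A|,-|A|)$ ordered lexicographically: assume the statement for every finite $\cA\seq G$ with either $|\cA^{-1}\cA|<|A^{-1}A|$, or $|\cA^{-1}\cA|=|A^{-1}A|$ and $|\cA|>|A|$. Write $Q:=A^{-1}A$; by Theorem~\reft{KW}, $r(g)\ge 2|A|-|Q|>\frac1{n+1}\,|A|$ for all $g\in Q$. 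As in Section~\refs{proof}, put $Q^+:=\{g\in Q\colon r(g)>|Q|-|A|\}$ (nonempty, inversion-stable) and $F:=\<Q^+\>$; Lemma~\refl{box} gives $FQ=QF=Q$, hence $(AF)^{-1}(AF)=FQF=Q$. If $AF\ne A$, apply the inductive hypothesis to $AF$ (same $Q$, strictly larger set) to obtain a finite $H'\le G$ and $A_0'\seq AF$ with $|A_0'|\le n$, $A_0'$ in one left $N(H')$-coset, $AF\seq A_0'H'$, $|A_0'H'|=|A_0'|\,|H'|$, $Q=(A_0')^{-1}A_0'H'$, $|Q|=(2|A_0'|-1)|H'|$, and $|AF|>\lpr 2-\frac1{n+1}\rpr^{-1}(2|A_0'|-1)|H'|$. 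Since $A\seq A_0'H'$ and $|A|>\lpr 2-\frac1{n+1}\rpr^{-1}|Q|$, a short computation shows $A$ meets all $|A_0'|$ left $H'$-cosets touched by $A_0'$; taking $A_0\seq A$ a transversal of these gives $|A_0|=|A_0'|$, and the remaining claims for $A$ follow from $A^{-1}A=Q$ together with the bi-$H'$-invariance of $Q$ and the fact that $N(H')$-elements commute setwise with $H'$. Hence we may assume $AF=A$; then, word for word as in Section~\refs{proof}, $Q$ is a union of left $F$-cosets, $F=Q^+\seq Q$, $r(g)=|A|$ precisely when $g\in F$, and — the decisive gain — $2|A|-|Q|\le r(g)\le |Q|-|A|$ for every $g\in Q$ with $r(g)<|A|$.

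Put $k:=|A|/|F|$; the cases $k\le 2$ follow from Theorem~\reft{main} (for $k=2$ one has $|Q|<4|F|$, so $|Q|\le 3|F|<\frac53|A|$), so assume $k\ge 3$. Let $Q_1:=\{g\in Q\colon r(g)>\frac12\,|A|\}$; since $r(g_1)+r(g_2)>|A|$ for $g_1,g_2\in Q_1$, Lemma~\refl{box} gives $Q_1^{-1}Q_1\seq Q$. Splitting the identity $\sum_{g\in Q}r(g)=|A|^2$ over $Q_1$ and its complement — using $r(g)\le|Q|-|A|$ off $F$, the divisibilities $|F|\mid r(g)$, $|F|\mid|Q|$, and the bound $|Q|\le\lpr\lcl\frac{2n+1}{n+1}\,k\rcl-1\rpr|F|$ — one reaches, by the same manipulation that produced~\refe{01A} with $\frac53$ replaced by $\frac{2n+1}{n+1}$, a lower bound $|Q_1|\ge|A|+c\,|F|$ with $c\ge 1$. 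In particular $|Q_1|>|A|$, so the inductive hypothesis applies to $Q_1$: there are a finite $H\le G$ and $B_0\seq Q_1$ with $|B_0|\le n$, $B_0$ in one left $N(H)$-coset, $Q_1\seq B_0H$, $|B_0H|=|B_0|\,|H|$, $Q_1^{-1}Q_1=B_0^{-1}B_0H$ and $|Q_1^{-1}Q_1|=(2|B_0|-1)|H|$. As $1\in Q_1=Q_1^{-1}$, one of the cosets in $B_0H$ is $H$ itself, so $H\seq Q_1^{-1}Q_1\seq Q$.

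It remains to transfer the structure of $Q_1$ to $A$. Decompose $A=A_1X_1\longu A_mX_m$ along the left $H$-cosets met by $A$, with $X_i\seq H$ nonempty, $A_i$ coset representatives, and $|A_1X_1|$ minimal; fix $a_1\in A_1X_1$. For $j\ne 1$ the coset of $a_1^{-1}a_j$ is nontrivial, so the sets $a_1^{-1}A_jX_j$ are pairwise disjoint subsets of $Q\stm H$, whence $|Q|\ge|H|+\lpr|A|-|A_1X_1|\rpr\ge|H|+\lpr1-\tfrac1m\rpr|A|$; combined with a lower bound on $|H|$ extracted from how $Q_1$ distributes among its (at most $n$) cosets of $H$ and from $|Q_1|>|A|$, this should force $m\le n$ — the general analogue of the step in Section~\refs{proof} that gave ``$m=1$ or $2$''. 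Once $m\le n$, one invokes a general form of Lemma~\refl{NH}: if $A\seq A_0H$ with $A_0\seq A$, $|A_0|\le n$, $|A_0H|=|A_0|\,|H|$, $|A^{-1}A|\le(2|A_0|-1)|H|$ and $|A|>\lpr2-\frac1{n+1}\rpr^{-1}(2|A_0|-1)|H|$, then for a suitable $H$ the set $A_0$ lies in a single left $N(H)$-coset and $A^{-1}A=A_0^{-1}A_0H$ with $|A^{-1}A|=(2|A_0|-1)|H|$. For the proof, write $A=\bigcup_{a\in A_0}aX_a$ with $X_a\seq H$; the size hypothesis forces $\min_a|X_a|>\tfrac12\,|H|$, so every diagonal product $X_a^{-1}X_a$ equals $H$ by the box principle; if some $a^{-1}b$ with $a,b\in A_0$ failed to lie in $N(H)$, the double coset $Ha^{-1}bH$ would exceed $|H|$ in size and the off-diagonal product $X_a^{-1}a^{-1}bX_b$ would spread across it, contradicting $|A^{-1}A|\le(2|A_0|-1)|H|$; hence all such $a^{-1}b$ normalize $H$, placing $A_0$ in one left $N(H)$-coset and, by the computation in Section~\refs{suff}, forcing $A^{-1}A=A_0^{-1}A_0H$. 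Finally $|A^{-1}A|=(2|A_0|-1)|H|$ follows on choosing $H$ maximal — so that the image of $A_0$ in $N(H)/H$ lies in no coset of a proper subgroup — and squeezing the integer $|A^{-1}A|/|H|$ between the quotient-set bound $2|A_0|-1$ and $\lpr2-\frac1{n+1}\rpr|A|/|H|<2|A_0|$.

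The reductions (passage to $AF$, the equivalence $r(g)=|A|\iff g\in F$, the inequality $r(g)\le|Q|-|A|$) and the counting leading to $|Q_1|>|A|$ carry over from Section~\refs{proof} with only arithmetic changes. I expect the real obstacles to be twofold. First, bounding the number $m$ of $H$-cosets met by $A$: for $n=2$ this came out of a single clean inequality, but for $n\ge3$ it genuinely interacts with the internal distribution of $Q_1$ across its $\le n$ cosets of $H$ and will likely demand iterating the structural information, or a secondary induction on $n$. Second, the general Lemma~\refl{NH} above, and within it the assertion that the right choice of $H$ makes the quotient set of $A_0$ modulo $H$ as small as $2|A_0|-1$: this needs a Vosper/Kneser-type rigidity statement for quotient sets in the sections of $N(H)/H$, which has no counterpart in the $n=2$ case. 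Those two points — not the parts that generalize verbatim — are where new input beyond Section~\refs{proof} is required.
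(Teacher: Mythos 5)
This statement is labeled a \emph{Conjecture} in the paper and is left open there: the author explicitly remarks that only the cases $n=1$ (Olson) and $n=2$ (Theorem~\reft{main}) are known, and even Proposition~\refp{sharp} — the converse direction — is stated with its proof omitted. So there is no paper proof to compare against; your text must be judged as an independent attempt, and you yourself are candid that it is a plan with two unresolved obstacles rather than a proof.

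Your two flagged gaps are genuine, and I would flag the first as worse than you suggest. To bound the number $m$ of $H$-cosets met by $A$ you combine $|Q|\ge|H|+\lpr1-\frac1m\rpr|A|$ with a lower bound on $|H|$ from $Q_1\seq B_0H$, $|B_0|\le n$, $|Q_1|>|A|$, which gives only $|H|>|A|/n$. Substituting into $|Q|<\lpr2-\frac1{n+1}\rpr|A|$ forces $\frac1m>\frac1n+\frac1{n+1}-1$, whose right-hand side is already negative for $n=2$ — so this inequality imposes no bound on $m$ whatsoever. The paper's Case~1 succeeds only because there $Q_1$ sits in a \emph{single} $H$-coset, giving the much stronger $|H|>|A|$, and the paper's Case~2 is disposed of by showing that the two-coset support $H\cup gH$ is itself a subgroup, reducing back to Case~1. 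For $n\ge3$ the inductive output places $Q_1$ in up to $n$ cosets of $H$, and $B_0H$ is generally \emph{not} a subgroup (take $A_0=\{1,g,g^2\}$ with $g\in N(H)$, $g^3\notin H$), so neither Case~1's estimate nor Case~2's reduction carries over; a genuinely new mechanism is required to locate $A$ in few $H$-cosets. Your second gap — the Vosper/Kneser-type rigidity needed to force $|A_0^{-1}A_0|=2|A_0|-1$ in the section $N(H)/H$ after choosing $H$ maximal — is likewise real and not supplied by any tool in the paper: for $n=2$ a two-element $A_0$ trivially has quotient set of size $\le 3$, so nothing was needed, but for $n\ge3$ this is an inverse theorem in its own right. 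In short: credible strategy, obstacles correctly located, but the proof is not there.
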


The inequality $|A|>\lpr2-\frac1{n+1}\rpr^{-1}(2|A_0|-1)|H|$ is worth 
commenting on. It can be shown that, along with other conclusions of the 
conjecture, it implies $|A|\le |A_0H| < |A|+ \frac{n}{2n+1}\,|H|$. Thus, this 
inequality ensures that $A$ is a \emph{dense} subset of the set $A_0H$. 

The particular case $n=1$ of the conjecture follows from Olson's theorem, 
while the case $n=2$ is the main result of this paper. 

As the following proposition shows, in the appropriate range,  
Conjecture~\refj{main} gives a necessary and sufficient condition for $A$ to 
satisfy $|A^{-1}A| < \lpr2-\frac1{n+1}\rpr\,|A|$. 
\begin{proposition}\label{p:sharp}
Let $A$ be a finite subset of a group $G$, and let $n$ be a positive integer. 
Suppose that there are a finite subgroup $H\le G$ and a subset $A_0\seq A$ of 
size $|A_0|\le n$ contained in a single left $N(H)$-coset such that $A\seq 
A_0H$, $|A_0H|=|A_0||H|$, and  $|A|>\lpr2-\frac1{n+1}\rpr^{-1}(2|A_0|-1)|H|$. 
If, in addition, $|A^{-1}A|<2|A|$ then, indeed, 
$|A^{-1}A|<\lpr2-\frac1{n+1}\rpr\,|A|$. 
\end{proposition}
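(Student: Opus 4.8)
The plan is to exploit the structural hypothesis to get a tight two‑sided description of the quotient set $A^{-1}A$, and then to turn the strict inequality $|A^{-1}A|<2|A|$ into the sharper bound by a purely arithmetic argument using divisibility. First I would set $k:=|A|/|H|$ — but note $A$ need not be a union of $H$‑cosets, so instead I would work with $N:=|H|$ and the containment $A\seq A_0H$ together with $|A_0H|=|A_0|\,|H|$ (so the $|A_0|$ left $H$‑cosets meeting $A$ are pairwise disjoint). Writing $A=\bigcup_{a\in A_0} aX_a$ with $X_a\seq H$ nonempty and $\sum_{a\in A_0}|X_a|=|A|$, the condition that $A_0$ lies in a single left $N(H)$‑coset means that for $a,b\in A_0$ we have $a^{-1}b\in N(H)$, hence $a^{-1}bH=Ha^{-1}b$; this is exactly what makes the computation of $A^{-1}A$ behave as in Proposition~\refp{NH}.

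Next I would compute $A^{-1}A=\bigcup_{a,b\in A_0}(X_a^{-1}a^{-1}b\,X_b)$. The key point is that $|A|>\lpr2-\frac1{n+1}\rpr^{-1}(2|A_0|-1)|H|$ forces each $|X_a|$ to be large — more precisely, summing over $A_0$ and using $|A_0|\le n$ one checks $|X_a|>\tfrac12|H|$ for every $a\in A_0$ (this is where the particular constant in the hypothesis is used), so $|X_a^{-1}|+|X_b^{-1}X_b\,\cdots|>|H|$ and each piece $X_a^{-1}a^{-1}b\,X_b$ fills out the whole double coset $Ha^{-1}bH=a^{-1}bH$, which has size exactly $|H|$ since $a^{-1}b\in N(H)$. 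Hence $A^{-1}A=\bigcup_{a,b\in A_0} a^{-1}bH = A_0^{-1}A_0\,H$, and in particular $|A^{-1}A|=|A_0^{-1}A_0|\,|H|$ is a multiple of $|H|$. By the same token $|A^{-1}A|\le (2|A_0|-1)|H|$ is automatic from $|A_0^{-1}A_0|\le 2|A_0|-1$; the content of the hypothesis $|A^{-1}A|<2|A|$ is that it is \emph{not} larger, i.e.\ $|A_0^{-1}A_0|$ takes one of finitely many values.

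Now the finishing step. Since $|A^{-1}A|$ is a multiple of $|H|$ and $|A^{-1}A|<2|A|$, write $|A^{-1}A|=m|H|$ with $m$ an integer and $m|H|<2|A|$, so $m\le\lcl 2|A|/|H|\rcl-1$. On the other hand $|A|>\lpr2-\frac1{n+1}\rpr^{-1}(2|A_0|-1)|H|\ge\lpr2-\frac1{n+1}\rpr^{-1}(2\cdot 1-1)|H|$, and combining $m\le 2|A_0|-1\le 2n-1$ with the lower bound on $|A|$ one verifies directly that $m|H|<\lpr2-\frac1{n+1}\rpr|A|$: indeed $\lpr2-\frac1{n+1}\rpr|A|>(2|A_0|-1)|H|\ge m|H|$. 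That is the whole inequality, once one knows $|A^{-1}A|=m|H|$ with $m\le 2|A_0|-1$.

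The main obstacle, and the step deserving the most care, is establishing $|X_a|>\tfrac12|H|$ for \emph{every} coset index $a\in A_0$ — a priori one only knows the average of the $|X_a|$ is large. The resolution is that if some $|X_a|\le\tfrac12|H|$, then $|A|\le \lpr|A_0|-\tfrac12\rpr|H|=\tfrac12(2|A_0|-1)|H|$, which contradicts $|A|>\lpr2-\frac1{n+1}\rpr^{-1}(2|A_0|-1)|H|$ because $\lpr2-\frac1{n+1}\rpr^{-1}=\frac{n+1}{2n+1}>\tfrac12$. Once every $X_a$ is large enough, the double‑coset bookkeeping is routine given Lemma~\refl{box}, the normalizer lemmas, and the double‑coset lemma from Section~\refs{prelim}, and the remaining inequality is the elementary estimate above.
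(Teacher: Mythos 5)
The paper states Proposition~\ref{p:sharp} without proof, so there is nothing to compare against; I therefore assess your argument on its own merits. It is correct, and quite likely the argument the author had in mind. The structure is exactly right: (a) decompose $A=\bigcup_{a\in A_0}aX_a$ and use the lower bound on $|A|$ to force $|X_a|>\frac12|H|$ for \emph{every} $a\in A_0$ (your pigeonhole argument is clean: if some $|X_a|\le\frac12|H|$ then $|A|\le(|A_0|-\frac12)|H|=\frac12(2|A_0|-1)|H|$, contradicting $|A|>\frac{n+1}{2n+1}(2|A_0|-1)|H|$ since $\frac{n+1}{2n+1}>\frac12$); (b) use $a^{-1}b\in N(H)$ plus the box principle inside $H$, exactly as in Proposition~\ref{p:NH}, to show each piece $X_a^{-1}a^{-1}bX_b$ equals the full coset $a^{-1}bH$, giving $A^{-1}A=A_0^{-1}A_0\,H$; (c) observe $|A^{-1}A|$ is thus a multiple of $|H|$, and from $|A^{-1}A|<2|A|\le 2|A_0||H|$ deduce $|A^{-1}A|\le(2|A_0|-1)|H|$, which combined with $|A|>\lpr2-\frac1{n+1}\rpr^{-1}(2|A_0|-1)|H|$ yields $|A^{-1}A|<\lpr2-\frac1{n+1}\rpr|A|$.

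One expository slip worth correcting: the sentence claiming that ``$|A^{-1}A|\le(2|A_0|-1)|H|$ is automatic from $|A_0^{-1}A_0|\le 2|A_0|-1$'' reads as if $|A_0^{-1}A_0|\le 2|A_0|-1$ were unconditional, which it is not (in a nonabelian group, or even abelian, $|A_0^{-1}A_0|$ can be as large as $|A_0|^2-|A_0|+1$). You do in fact use the hypothesis $|A^{-1}A|<2|A|$ to get the bound $m\le 2|A_0|-1$, but the chain $m|H|=|A^{-1}A|<2|A|\le 2|A_0||H|\Rightarrow m<2|A_0|\Rightarrow m\le 2|A_0|-1$ should be spelled out rather than asserted; in your ``finishing step'' you wrote $m\le\lcl 2|A|/|H|\rcl-1$ and then ``combining $m\le 2|A_0|-1$'' without making the bridge $|A|\le|A_0||H|$ explicit. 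With that step written out, the argument is complete.
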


We omit the proof since, anyway, Proposition~\refp{sharp} is not of much 
importance as long as Conjecture~\refj{main} remains open. 

\vfill

\bigskip
\end{document}